\documentclass[10pt,a4paper]{article}
\usepackage[latin1]{inputenc}
\usepackage{amsmath, url, hyperref}
\usepackage{amsthm, bm}
\usepackage{amsfonts}
\usepackage{amssymb}
\usepackage{graphicx}
\usepackage[margin=2.7cm]{geometry}
\usepackage{xcolor}
\newtheorem{theorem}{Theorem}
\newtheorem{lemma}{Lemma}

\author{Mahadi Ddamulira$ ^{1,2} $}
\title{Tribonacci-Lucas numbers that are palindromic concatenations of two distinct repdigits}
\date{}
\begin{document}
\maketitle

\abstract{
\noindent The Tribonacci-Lucas sequence $\{S_n\}_{n\ge 0}$ is defined by the linear recurrence relation $S_{n+3} = S_{n+2} + S_{n+1} + S_n$, for $ n\ge 0 $,  with  the initial conditions $S_0 =S_2= 3$ and $S_1 = 1$. A palindromic number is a number that remains the same when its digits are reversed.  This paper uses Baker's theory for nozero lower bounds for linear forms in logarithms of algebraic numbers, and reduction methods involving the theory of continued fraction to determine all Tribonacci-Lucas numbers that are palindromic concatenations of two distinct repdigits.}

\noindent
{\bf Keywords and phrases}: Tribonacci-Lucas numbers; palindromes; repdigits; linear forms in logarithms; Baker's method.
 
\noindent
{\bf 2020 Mathematics Subject Classification}: 11B39, 11D61, 11J86.

\section{Introduction.}

\noindent Let $\{S_n\}_{n\ge 0}$ be the sequence of Tribonacci-Lucas numbers, defined by the linear recurrence relation  $S_{n+3} = S_{n+2} + S_{n+1} + S_n$, for $n \ge 0$, with the initial conditions  $S_0 =S_2= 3$ and $S_1 = 1$. This sequence A001644 on The On-Line Encyclopedia of Integer Sequences (OEIS) \cite{OEIS}.  The first few terms of this sequence are given by
\begin{equation*}
(S_n)_{n \geq 0} = 3, 1, 3, 7, 11, 21, 39, 71, 131, 241, 443, 815, 1499, 2757, 5071, 9327, 17155, 31553, \ldots.
\end{equation*}
\noindent The sequence $ \{S_n\}_{n\ge 0} $ is a well--known companion sequence of the classical {\it Tribonacci sequence}, $ \{T_n\}_{n\ge 0} $. That is, the two sequences satisfy the same linear recurrence relation, but with different initial conditions.
A palindromic number (or {\it palindrome}) is a number that remains the same when its digits are reversed (such as $17571$). In other words, a palindrome has reflectional symmetry across a vertical axis. A natural followup of the results in \cite{MD1, MD2} would be a characterization of \textit{palindromic} Tribonacci-Lucas numbers. That is, we study the Diophantine equation

\begin{equation}\label{eq1}
S_n = \overline{\underbrace{d_1\cdots d_1}_\text{$\ell$ times} \underbrace{d_2\cdots d_2}_\text{$m$ times} \underbrace{d_1\cdots d_1}_\text{$\ell$ times}}, 
\end{equation}
in non-negative integers $ (n,d_1,d_2,\ell, m )$, where $ d_1,d_2 \in \{0, \ldots, 9  \}, \quad d_1 > 0$, and $ \ell,m\ge 1 $. 

\noindent The main result of this paper is the following.

\begin{theorem}\label{thm1x}
The only Tribonacci-Lucas number that is a palindromic concatenation of two distinct repdigits is,  $ S_8=131 $.
\end{theorem}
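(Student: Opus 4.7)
The strategy follows the now-classical template: rewrite the Diophantine equation \eqref{eq1} as several linear forms in three logarithms, apply Matveev's theorem to each to obtain an absolute---if enormous---upper bound on $n$, and then use a Baker--Davenport style reduction to make this bound small enough for a direct computer search.

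Using the identity $\underbrace{d\cdots d}_k = d\,(10^k-1)/9$, equation \eqref{eq1} becomes
\begin{equation*}
9\,S_n \;=\; d_1\cdot 10^{2\ell+m} \;+\; (d_2-d_1)\cdot 10^{\ell+m} \;+\; (d_1-d_2)\cdot 10^{\ell} \;-\; d_1,
\end{equation*}
while the Binet-type formula $S_n = \alpha^n + \beta^n + \gamma^n$, where $\alpha\approx 1.839$ is the dominant root of $x^3-x^2-x-1$ and $|\beta|=|\gamma|<1$, gives $|S_n - \alpha^n|<2$ for all reasonably large $n$. This lets me replace $S_n$ by $\alpha^n$ with an $O(1)$ error. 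Matching orders of magnitude on both sides (the right-hand side has $2\ell+m$ digits) yields the rough relation $n\log\alpha \asymp (2\ell+m)\log 10$, so that $n$, $\ell$, and $m$ stay within bounded multiplicative ratios of one another.

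I would then peel off terms of decreasing size to produce three nested approximations:
\begin{align*}
\Lambda_1 &:= \alpha^n\cdot \tfrac{9}{d_1}\cdot 10^{-(2\ell+m)} - 1, \\
\Lambda_2 &:= \bigl(9\alpha^n - d_1\cdot 10^{2\ell+m}\bigr)\cdot \tfrac{1}{(d_2-d_1)\,10^{\ell+m}} - 1, \\
\Lambda_3 &:= \bigl(9\alpha^n - d_1\cdot 10^{2\ell+m} - (d_2-d_1)\,10^{\ell+m}\bigr)\cdot \tfrac{1}{(d_1-d_2)\,10^{\ell}} - 1,
\end{align*}
whose sizes are governed, respectively, by $10^{-\ell}$, $10^{-(\ell+m)}$, and $10^{-(2\ell+m)}$ (up to constants). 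Taking logarithms, each $|\log(1+\Lambda_i)|$ becomes a linear form $\Gamma_i$ in $\log\alpha$, $\log 10$, and logarithms of rationals of bounded height built from $d_1,d_2\in\{1,\dots,9\}$. Matveev's theorem then supplies a matching \emph{lower} bound involving $\log n$. Comparing the two inequalities in cascade---$\Lambda_1$ bounds $\ell$ in terms of $\log n$, then $\Lambda_2$ bounds $m$, then $\Lambda_3$ bounds $n$---gives an absolute but astronomical upper bound $n<C$.

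Finally, I would shrink $C$ by applying the Baker--Davenport reduction lemma (or the LLL-based variant of de Weger) to the inequalities $|\Gamma_i|<\text{small}$, using the continued fraction expansion of $\log\alpha/\log 10$ and a well-chosen convergent denominator. Two or three rounds of reduction should lower $n$ to a few hundred, after which an exhaustive search over $(n,d_1,d_2,\ell,m)$ with $d_1\neq d_2$ picks out $S_8=131=\overline{1\,3\,1}$ (with $d_1=1$, $d_2=3$, $\ell=m=1$) as the only solution. The main obstacle I expect is the careful bookkeeping needed to ensure each $\Lambda_i$ is genuinely nonzero before invoking Matveev---in particular, handling separately any subcase in which a leading coefficient like $d_2-d_1$ could vanish (ruled out by hypothesis but to be documented), and choosing the Matveev heights tight enough that the resulting bound on $n$ is small enough for the reduction step to finish in a realistic amount of computation.
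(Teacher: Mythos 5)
Your overall strategy is exactly the paper's: the same rewriting of \eqref{eq1} via $\underbrace{d\cdots d}_k = d(10^k-1)/9$, the same three-stage cascade of linear forms bounding $\ell$, then $m$, then $n$ via Matveev, the same nonvanishing check (handled in the paper by applying the Galois automorphism $\alpha\mapsto\beta$ and noting $|\beta|<1$), and the same continued-fraction reduction followed by a finite search. However, two of your explicit forms would not work as written. First, $\Lambda_2$ and $\Lambda_3$ are not of the shape $\eta_1^{b_1}\eta_2^{b_2}\eta_3^{b_3}-1$ demanded by Matveev's theorem: expanding, e.g., $\Lambda_2=\tfrac{9}{d_2-d_1}\alpha^{n}10^{-\ell-m}-\tfrac{d_1}{d_2-d_1}10^{\ell}-1$, which is a sum of three terms rather than a product of powers minus one, so it does not become a linear form in logarithms. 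The paper's fix is to factor the two largest powers of ten together, $d_1\cdot 10^{2\ell+m}-(d_1-d_2)\cdot 10^{\ell+m}=\bigl(d_1\cdot 10^{\ell}-(d_1-d_2)\bigr)\cdot 10^{\ell+m}$, so that $\Gamma_2=\tfrac{9}{d_1\cdot 10^{\ell}-(d_1-d_2)}\,\alpha^{n}\,10^{-\ell-m}-1$ is a genuine product form whose first factor is a positive rational of height controlled by the bound on $\ell$ from step one.

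Second, and more seriously, your $\Lambda_3$ does not do the job you assign to it. After substituting the Binet formula its numerator reduces to $-d_1-9\xi(n)=O(1)$, while you divide only by $(d_1-d_2)\cdot 10^{\ell}$; hence $|\Lambda_3|\asymp 10^{-\ell}$, not $10^{-(2\ell+m)}$, so this form merely re-bounds $\ell$ and yields no bound on $n$. (Your size claim for $\Lambda_2$ is also off: it is $\asymp 10^{-m}$, not $10^{-(\ell+m)}$ --- which is fine, since $10^{-m}$ is exactly what bounds $m$.) To get a form of size $\alpha^{-n}$ you must subtract \emph{all three} powers of ten, leaving the $O(1)$ remainder, and normalize by $\alpha^{n}$: the paper takes $\Gamma_3=\tfrac{1}{9}\bigl(d_1\cdot 10^{\ell+m}+(d_1-d_2)\cdot 10^{m}-(d_1-d_2)\bigr)\cdot 10^{\ell}\,\alpha^{-n}-1$ with $|\Gamma_3|<2/\alpha^{n}$. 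This is needed not so much for the initial bound (once $\ell$ and $m$ are bounded polynomially in $\log n$, the digit-count relation $n\log\alpha<(2\ell+m)\log 10+1$ already gives $n\ll(\log n)^2$) as for the final reduction, where an inequality decaying like $\alpha^{-n}$ is what lets Baker--Davenport push $n$ below the range of the computer search. With these repairs, and the separate Legendre/LLL treatment of the degenerate cases $\mu=0$ in the reduction, your sketch coincides with the paper's proof.
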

\noindent
This result gives a continuation of the results presented  in \cite{MD, MD3}. The method of proof of our main results makes use of Baker's theory for nozero lower bounds for linear forms in logarithms of algebraic numbers, and reduction methods involving the theory of continued fractions. All computations are done with the aid of a simple computer program in {\tt SageMath}.

\section{Preliminary results.}

\noindent In this section we recall some facts about the Tribonacci-Lucas numbers and other preliminary lemmas, including Baker's theory for linear forms in three logs, Baker-Davenport reduction procedure, and the LLL algorithm.  These results are crucial to the proof of the main result.

\subsection{Some properties of the Tribonacci-Lucas sequence.}
Some of the important properties of the Tribonacci-Lucas sequence are stated here. It is well--known that the characteristic equation,
\begin{align*}
 \Psi(x):=x^3-x^2-x-1=0,
\end{align*}
has roots $ \alpha, ~\beta,~ \gamma = \bar{\beta} $, where
\begin{eqnarray}\label{Trib1}
\alpha =\dfrac{1+(w_1+w_2)}{3}, \qquad \beta = \dfrac{2-(w_1+w_2)+\sqrt{-3}(w_1-w_2)}{6},
\end{eqnarray}
with
\begin{eqnarray}\label{Trib2}
w_1=\sqrt[3]{19+3\sqrt{33}} \quad \text{and}\quad w_2=\sqrt[3]{19-3\sqrt{33}}.
\end{eqnarray}
The Binet-like formula for the general terms of the Tribonacci-Lucas sequence is given by,
\begin{eqnarray}\label{Trib3}
S_n = \alpha^{n}+\beta^{n}+\gamma^{n} \qquad \text{ for all} \quad n\ge 0.
\end{eqnarray}
\noindent 
Numerically, the following estimates hold:
\begin{equation}\label{Trib4}
\begin{aligned}
1.83&<\alpha<1.84;\\
0.73 < |\beta|&=|\gamma|=\alpha^{-\frac{1}{2}}< 0.74.
\end{aligned}
\end{equation}
\noindent
From \eqref{Trib1}, \eqref{Trib2}, and \eqref{Trib4}, it is easy to see that the contribution the complex conjugate roots $ \beta $ and $ \gamma $, to the right-hand side of \eqref{Trib3}, is very small. In particular, we set
\begin{equation}\label{Trib5}
\xi(n) := S_n -  \alpha^n =  \beta^n + \gamma^n. \quad \text{Then,} \quad |\xi(n)| \le  \frac{2}{\alpha^{n/2}} \quad \text{for all} \quad n \geq 1.
\end{equation}
The last inequality in \eqref{Trib5} follows from the fact that $ |\beta|=|\gamma|=\alpha^{-\frac{1}{2}} $. That is, for any $ n\ge 1 $,
\begin{align*}
|\xi (n)|=\left|\beta^{n}+\gamma^{n}\right| \le |\beta|^{n}+ |\gamma|^{n}= \alpha^{-\frac{n}{2}}+ \alpha^{-\frac{n}{2}} = 2 \alpha^{-\frac{n}{2}} = \dfrac{2}{\alpha^{n/2}}.
\end{align*}

\noindent The following estimate also holds:

\begin{lemma}\label{lem1}
Let $m \ge 1$ be a positive integer. Then

\begin{equation*}
\alpha^{m-1} \leq S_m < \alpha^{m+1}.
\end{equation*}
\end{lemma}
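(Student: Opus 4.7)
My plan is to prove the two-sided bound $\alpha^{m-1}\le S_m<\alpha^{m+1}$ by strong induction on $m$, exploiting the fact that $\alpha$ is a root of the characteristic polynomial, so that $\alpha^3=\alpha^2+\alpha+1$. The point is that this identity is exactly what is needed to make the recurrence $S_{n+3}=S_{n+2}+S_{n+1}+S_n$ preserve inequalities of the form ``$S_k$ lies between consecutive powers of $\alpha$''.

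For the base step I would verify the inequality directly for $m=1,2,3$ from the listed values $S_1=1,\ S_2=3,\ S_3=7$ together with the numerical estimate $1.83<\alpha<1.84$ from \eqref{Trib4}. Concretely: $\alpha^{0}=1\le 1<\alpha^{2}$ (where $\alpha^2>3.34$), $\alpha\le 3<\alpha^3$ (where $\alpha^3>6.12$), and $\alpha^2\le 7<\alpha^4$ (where $\alpha^4>11.2$). The only tight case is $m=1$, where the lower bound is attained with equality; this is the reason I prefer the inductive route over an approach based on \eqref{Trib5}, which cannot handle $m=1$ cleanly because $2\alpha^{-1/2}$ exceeds $\alpha^{0}(\alpha-1)$.

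For the inductive step, assume the claim holds for $m-1, m-2, m-3$ with $m\ge 4$. Using the recurrence and the lower induction hypothesis,
\[
S_m=S_{m-1}+S_{m-2}+S_{m-3}\ge \alpha^{m-2}+\alpha^{m-3}+\alpha^{m-4}=\alpha^{m-4}(\alpha^{2}+\alpha+1)=\alpha^{m-4}\cdot\alpha^{3}=\alpha^{m-1},
\]
where in the last equality I used the characteristic equation. The upper bound follows identically:
\[
S_m=S_{m-1}+S_{m-2}+S_{m-3}<\alpha^{m}+\alpha^{m-1}+\alpha^{m-2}=\alpha^{m-2}(\alpha^{2}+\alpha+1)=\alpha^{m+1}.
\]

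There is no real obstacle; the only subtlety is that one must verify three initial values (not just one) because the recurrence has order three, and one must be careful to include $m=1$ explicitly since the Binet-based slack estimate $|\xi(m)|\le 2\alpha^{-m/2}$ from \eqref{Trib5} is too weak at that single point. Everything else is a direct consequence of the identity $\alpha^3=\alpha^2+\alpha+1$.
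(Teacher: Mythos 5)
Your proof is correct and follows exactly the route the paper indicates: the paper's proof is the one-line remark that the lemma ``follows from a simple inductive argument, with the fact that $\alpha^3=\alpha^2+\alpha+1$,'' and your write-up is a full, accurate expansion of precisely that argument, with the three base cases $m=1,2,3$ checked correctly against the estimate $1.83<\alpha<1.84$. Nothing further is needed.
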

\begin{proof}
 Lemma \ref{lem1} follows from a simple inductive argument, with the fact that $ \alpha^3=\alpha^2+\alpha+1 $.
\end{proof}

\noindent
Let $ \mathbb{K}:=\mathbb{Q}(\alpha, \beta) $ be the splitting field of the polynomial $ \Psi $ over $ \mathbb{Q} $. Then, $ [\mathbb{K}, \mathbb{Q}]=6 $. Furthermore, $ [\mathbb{Q}(\alpha):\mathbb{Q}]=3 $. The Galois group of $ \mathbb{K} $ over $ \mathbb{Q} $ is given by
\begin{align*}
\mathcal{G}:=\text{Gal}(\mathbb{K/Q})\cong \{(1), (\alpha\beta),(\alpha\gamma), (\beta\gamma), (\alpha\beta\gamma), (\alpha\gamma\beta)\} \cong S_3.
\end{align*}
Thus, we identify the automorphisms of $ \mathcal{G} $ with the permutations of the zeros of the polynomial $ \Psi $. For example, the permutation $ (\alpha\beta) $ corresponds to the automorphism $ \sigma: \alpha \to \beta, ~\beta \to \alpha, ~\gamma \to \gamma $. This will be used later to obtain a contradiction on the size of the absolute values of certain bounds.

\subsection{Linear forms in logarithms.}
Our approach follows the standard procedure of obtaining nonzero lower bounds for certain linear forms in logarithms of algebraic numbers. The upper bounds are obtained via a manipulation of the associated Binet's formula for the given sequence. For the lower bounds, we need the celebrated Baker's theorem on linear forms in logarithms of algebraic numbers. Before stating the result, we need the definition of the (logarithmic) Weil height of an algebraic number.
\medskip

\noindent Let $\eta$ be an algebraic number of degree $d$ with minimal polynomial
\begin{align*}
P(x) = a_0 \prod_{j = 1}^d (x - \eta_j),
\end{align*}
where the leading coefficient $a_0$ is positive and the $\eta_j$'s are the conjugates of $\eta$. The logarithmic height of $\eta$ is given by
\begin{align*}
h(\eta) := \frac{1}{d} \left( \log a_0 + \sum_{j = 1}^d \log \left( \max \{|\eta_j|, 1  \}  \right)   \right).
\end{align*}
Note that, if $\eta = \frac{p}{q} \in \mathbb{Q}$ is a reduced rational number with $q > 0$, then the above definition reduces to $$h(\eta) = \log \max \{ |p|,q \}.$$
We list some well known properties of the height function below, which we shall subsequently use without reference:
\begin{equation}\label{heights}
\begin{aligned}
h(\eta_1 \pm \eta_2) & \leq h(\eta_1) + h(\eta_2) + \log 2, \\
h(\eta_1 \eta_2 ^{\pm}) & \leq h(\eta_1) + h(\eta_2), \\
h(\eta^s) & = |s| h(\eta), \quad (s \in \mathbb{Z}).
\end{aligned}
\end{equation}
Here we present the version of Baker's theorem proved by Bugeaud, Mignotte and Siksek (\cite{BMS}, Theorem 9.4), which is itself due to Matveev.
\begin{theorem}[Bugeaud, Mignotte, Siksek, \cite{BMS}]\label{thm1}
Let $\eta_1, \ldots, \eta_t$ be positive real algebraic numbers in a real algebraic number field $\mathbb{K} \subset \mathbb{R}$ of degree $D$. Let $b_1, \ldots, b_t$ be nonzero integers such that 
\begin{align*}
\Gamma := \eta_1 ^{b_1} \ldots \eta_t ^{b_t} - 1 \neq 0.
\end{align*}
Then
\begin{align*}
\log | \Gamma| > - 1.4 \times 30^{t+3} \times t^{4.5} \times D^2 (1 + \log D)(1 + \log B)A_1 \ldots A_t,
\end{align*}
where
\begin{align*}
B \geq \max \{|b_1|, \ldots, |b_t| \},
\end{align*}
and
\begin{align*}
A_j \geq \max \{ D h(\eta_j), |\log \eta_j|, 0.16  \}, \quad \text{for all} \quad j = 1, \ldots,t.
\end{align*}
\end{theorem}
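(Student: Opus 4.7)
The plan is to follow the Baker--Matveev method for lower bounds on linear forms in logarithms, which is the classical framework that makes such an explicit bound possible; the numerical form stated here is Matveev's theorem as worked out by Bugeaud--Mignotte--Siksek in \cite{BMS}. The strategy combines three ingredients: an auxiliary exponential polynomial constructed via Siegel's lemma, an analytic extrapolation argument, and a Kummer-theoretic descent that is responsible for the sharpened $D^2$ dependence on the field degree (in place of the $D^{t+1}$-type factors appearing in the earlier formulations of Baker and W\"ustholz).

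First I would pass from $\Gamma = \eta_1^{b_1}\cdots\eta_t^{b_t}-1$ to the associated linear form $\Lambda = b_1\log\eta_1 + \cdots + b_t\log\eta_t$, since $|\Gamma|$ small forces $|\Lambda|$ small of comparable order. Assuming for contradiction that $|\Lambda|$ is below the claimed lower bound, I would construct an exponential polynomial $F(z_1,\ldots,z_{t-1}) = \sum_{\lambda} p(\lambda)\,\eta_1^{\lambda_1 z_1}\cdots\eta_{t-1}^{\lambda_{t-1} z_{t-1}}$ with integer coefficients $p(\lambda)$ obtained from Siegel's lemma, so that $F$ together with a controlled number of its derivatives vanishes at every integer point in a box $[0,K]^{t-1}$. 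The box size $K$, the range of the $\lambda_i$, the degree of the coefficients, and the order of vanishing must be calibrated so that the linear system in the $p(\lambda)$ is just solvable and the resulting parameters match the shape of the target bound in $B$ and in the $A_j$.

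The analytic heart of the argument is the extrapolation step: exploiting the hypothetical smallness of $\Lambda$, one eliminates $\log\eta_t$ to reduce to a single complex variable, applies Schwarz's lemma on a suitably large disk, and concludes that $F$ takes extremely small values at points outside the original box. An arithmetic argument, comparing these small values against their denominators, promotes them to exact zeros at integer points, thereby enlarging the vanishing set. Iterating this extrapolation and combining it with a zero estimate for exponential polynomials (Philippon, or a Brownawell--Masser--W\"ustholz type multiplicity estimate) either forces a multiplicative dependence relation among the $\eta_i$, which is handled by descent reducing the value of $t$, or else produces a direct contradiction to the parameter balancing. Matveev's Kummer-theoretic step adjoins suitable $p$-th roots of the $\eta_i$ to the ambient field, and the resulting flexibility trades a factor of $D^{t-1}$ present in older versions for essentially a constant, yielding the improved exponent on $D$.

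The main obstacle, and the reason the complete proof is not reproduced here but invoked through \cite{BMS}, is the quantitative bookkeeping rather than the qualitative outline. Each of the three ingredients above introduces numerical constants depending on $t$, $D$, $B$, and the $A_j$, and extracting the precise prefactor $1.4\times 30^{t+3}\times t^{4.5}\times D^2(1+\log D)(1+\log B)A_1\cdots A_t$ requires a meticulous and simultaneously optimised calibration of every parameter throughout the construction, extrapolation, and descent; this calibration is the genuine technical content of the theorem and is substantially more delicate than the sketch above suggests.
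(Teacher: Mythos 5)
The paper offers no proof of this statement: it is imported verbatim from Bugeaud--Mignotte--Siksek (\cite{BMS}, Theorem 9.4), itself a specialization of Matveev's theorem to real number fields, so your decision to invoke the citation rather than reprove the bound matches the paper's treatment exactly. Your sketch of the underlying machinery --- auxiliary function via Siegel's lemma, Schwarz-lemma extrapolation promoted to zeros by an arithmetic (Liouville-type) argument, a zero estimate with descent on multiplicative dependence, and Matveev's Kummer-theoretic step producing the $D^2(1+\log D)$ dependence --- is an accurate account of the proof's architecture, and you rightly identify that the explicit constant $1.4\times 30^{t+3}\times t^{4.5}$ resides entirely in the quantitative calibration you defer to \cite{BMS}.
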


\subsection{Reduction procedure.}
The bounds on the variables obtained via Baker's theorem are usually too large for any computational purposes. In order to get further refinements, we use the Baker-Davenport reduction procedure. The variant we apply here is the one due to Dujella and Peth\H{o} (\cite{DP}, Lemma 5a). For a real number $r$, we denote by $\parallel r \parallel$ the quantity $\min \{|r - n| : n \in \mathbb{Z} \}$, the distance from $r$ to the nearest integer.
\begin{lemma}[Dujella, Peth\H o, \cite{DP}]\label{DP}
Let $\kappa \neq 0, A, B$ and $\mu$ be real numbers such that $A > 0$ and $B > 1$. Let $M > 1$ be a positive integer and suppose that $\frac{p}{q}$ is a convergent of the continued fraction expansion of $\kappa$ with $q > 6M$. Let 
\begin{align*}
\varepsilon := \parallel \mu q \parallel - M \parallel \kappa q \parallel.
\end{align*}
If $\varepsilon > 0$, then there is no solution of the inequality
\begin{align*}
0 < |m \kappa - n + \mu| < AB^{-k}
\end{align*}
in positive integers $m,n,k$ with
\begin{align*}
\frac{\log (Aq/\varepsilon)}{\log B} \leq k \quad \text{and} \quad m \leq M.
\end{align*}
\end{lemma}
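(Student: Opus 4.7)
The plan is to argue by contradiction: assume positive integers $(m,n,k)$ satisfy all the stated conditions simultaneously, and derive a numerical clash. First I would unpack the hypothesis $k \geq \log(Aq/\varepsilon)/\log B$ as $AB^{-k} \leq \varepsilon/q$, and multiply the given inequality $|m\kappa - n + \mu| < AB^{-k}$ through by $q$. This immediately yields
\begin{equation*}
0 < |mq\kappa - nq + q\mu| < \varepsilon,
\end{equation*}
converting the problem into one about a linear form with integer-scaled coefficients and target $\varepsilon$.

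Next I would bring the continued-fraction hypothesis into play. Since $p/q$ is a convergent of $\kappa$ and $q > 6M \geq 6$, the best-approximation property forces $|q\kappa - p| < 1/q < 1/2$, so $p$ is the nearest integer to $q\kappa$ and hence $|\delta| = \parallel q\kappa \parallel$, where $\delta := q\kappa - p$. Substituting $q\kappa = p + \delta$ into the displayed bound gives
\begin{equation*}
|(mp - nq) + (q\mu + m\delta)| < \varepsilon.
\end{equation*}
Because $mp - nq$ is an integer, this particular integer sits within $\varepsilon$ of $-(q\mu + m\delta)$, so the distance from $q\mu + m\delta$ to the nearest integer satisfies $\parallel q\mu + m\delta \parallel < \varepsilon$.

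The heart of the argument, and the only real subtlety, is producing the matching lower bound $\parallel q\mu + m\delta \parallel \geq \varepsilon$ from the hypotheses alone. I would invoke the triangle inequality for the nearest-integer distance in the form $\parallel x + y \parallel \geq \parallel x \parallel - |y|$ (valid when $\parallel x \parallel \geq |y|$, which is precisely the meaning of $\varepsilon > 0$), combined with $|m\delta| = m \parallel q\kappa \parallel \leq M \parallel q\kappa \parallel$, to conclude
\begin{equation*}
\parallel q\mu + m\delta \parallel \geq \parallel q\mu \parallel - M \parallel q\kappa \parallel = \varepsilon.
\end{equation*}
The two bounds on $\parallel q\mu + m\delta \parallel$ contradict each other, so no such $(m,n,k)$ can exist. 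The auxiliary bound $q > 6M$ is not logically required for this derivation, but since $\parallel q\kappa \parallel < 1/q$ holds for any convergent with $q>1$, it guarantees $M\parallel q\kappa \parallel < 1/6$, which is what makes the criterion $\varepsilon > 0$ realistic to verify in applications. Every other step is either direct substitution or routine arithmetic, so there is no further obstacle.
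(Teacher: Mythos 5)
The paper does not prove this lemma---it is quoted from Dujella and Peth\H{o} \cite{DP} as a known result---and your proposal correctly reproduces what is essentially the original argument: multiply through by $q$, write $q\kappa = p + \delta$ with $|\delta| = \parallel q\kappa \parallel$ (legitimate since $q > 6M \geq 6$ forces $|q\kappa - p| < 1/q < 1/2$), and trap $\parallel q\mu + m\delta \parallel$ strictly below $qAB^{-k} \leq \varepsilon$ while bounding it below by $\parallel q\mu \parallel - M\parallel q\kappa \parallel = \varepsilon$. Your side remarks are also accurate: the inequality $\parallel x + y \parallel \geq \parallel x \parallel - |y|$ in fact holds unconditionally (not just when $\parallel x \parallel \geq |y|$), and $q > 6M$ plays no logical role in the deduction, serving only to ensure $M \parallel \kappa q \parallel < 1/6$ so that $\varepsilon > 0$ is realistically attainable.
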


\noindent
Lemma \ref{DP} cannot be applied when $\mu=0$ (since then $\varepsilon<0$). In this case, we use the following criterion due to Legendre, a well--known result from the theory of Diophantine approximation. For further details, we refer the reader to the books of Cohen \cite{HC1, HC2}.
\begin{lemma}[Legendre, \cite{HC1, HC2}]
\label{lg}
Let $\kappa$ be real number and $x,y$ integers such that
\begin{align*}
\left|\kappa-\frac{x}{y}\right|<\frac{1}{2y^2}.
\end{align*}
Then $x/y=p_k/q_k$ is a convergent of $\kappa$. Furthermore, let  $ M $ and $ N $ be a nonnegative integers such that $ q_N> M $. Then putting $ a(M):=\max\{a_{i}: i=0, 1, 2, \ldots, N\} $, the inequality 
\begin{align*}
\left|\kappa-\frac{x}{y}\right|\ge \frac{1}{(a(M)+2)y^2},
\end{align*}
holds for all pairs $ (x,y) $ of positive integers with $ 0<y<M $.
\end{lemma}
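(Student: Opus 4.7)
The plan is to prove the two assertions separately: the first is the classical theorem of Legendre characterizing very good rational approximations, while the second is a quantitative refinement built directly on the standard error formula for continued fraction convergents.

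For the first assertion I would argue by contradiction. After reducing to $\gcd(x,y)=1$, let $p_k/q_k$ denote the convergents of the regular continued fraction expansion of $\kappa$ and choose the unique index $k$ with $q_k \le y < q_{k+1}$. If $x/y \neq p_k/q_k$, then since both fractions are in lowest terms we have $|p_k y - q_k x| \ge 1$, so
\[
\left|\frac{p_k}{q_k} - \frac{x}{y}\right| \ge \frac{1}{q_k y}.
\]
Combining this with the triangle inequality, the standard estimate $|\kappa - p_k/q_k| < 1/(q_k q_{k+1})$, and the hypothesis $|\kappa - x/y| < 1/(2y^2)$, I would obtain
\[
\frac{1}{q_k y} \le \frac{1}{q_k q_{k+1}} + \frac{1}{2y^2}.
\]
Clearing denominators and using $q_{k+1} > y \ge q_k$ then contradicts this inequality unless $q_k = y$ and $p_k = x$, thereby forcing $x/y = p_k/q_k$.

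For the second assertion I would start from the classical identity
\[
\left|\kappa - \frac{p_k}{q_k}\right| = \frac{1}{q_k(\alpha_{k+1} q_k + q_{k-1})},
\]
where $\alpha_{k+1} = [a_{k+1}; a_{k+2}, \ldots]$ is the $(k+1)$st complete quotient of $\kappa$. Since $\alpha_{k+1} < a_{k+1} + 1$ and $q_{k-1} < q_k$, this yields
\[
\left|\kappa - \frac{p_k}{q_k}\right| > \frac{1}{(a_{k+1}+2) q_k^2}.
\]
Now let $(x,y)$ be any pair with $0 < y < M$. If $x/y$ equals a convergent $p_k/q_k$, then $q_k = y < M \le q_N$ forces $k \le N$, so $a_{k+1} \le a(M)$ and the desired lower bound is immediate. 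Otherwise, the contrapositive of the first assertion yields $|\kappa - x/y| \ge 1/(2y^2) \ge 1/((a(M)+2) y^2)$ since $a(M) \ge 0$.

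The only delicate step I anticipate is the identification in the first part: one must rule out the possibility that $x/y$ is an intermediate fraction (mediant of two consecutive convergents) that approximates $\kappa$ reasonably well but is not itself a convergent. This is precisely where the strict bound $1/(2y^2)$, as opposed to the weaker $1/y^2$, becomes indispensable, and the argument hinges on comparing the two neighboring convergents $p_k/q_k$ and $p_{k+1}/q_{k+1}$ simultaneously in order to force $x/y$ to coincide with one of them.
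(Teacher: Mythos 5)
The paper gives no proof of this lemma at all --- it is quoted as a known result and sourced to Cohen \cite{HC1, HC2} --- so your attempt can only be judged against the classical arguments. Your second half is essentially correct: the identity $\left|\kappa - p_k/q_k\right| = 1/\bigl(q_k(\alpha_{k+1}q_k + q_{k-1})\bigr)$, together with $\alpha_{k+1} < a_{k+1}+1$ and $q_{k-1} < q_k$, does give $\left|\kappa - p_k/q_k\right| > 1/\bigl((a_{k+1}+2)q_k^2\bigr)$; the bookkeeping $q_k \le y < M < q_N$ forces $k+1 \le N$ and hence $a_{k+1} \le a(M)$; and the non-convergent case follows from the contrapositive of the first assertion since $a(M)+2 \ge 2$.

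The genuine gap is in your proof of the first assertion, on which that non-convergent case depends. From $\frac{1}{q_k y} \le \frac{1}{q_k q_{k+1}} + \frac{1}{2y^2}$ you claim a contradiction with $q_k \le y < q_{k+1}$, but multiplying through by $q_k y$ gives $1 \le \frac{y}{q_{k+1}} + \frac{q_k}{2y}$, which is perfectly satisfiable in that range: take $q_k = 2$, $y = 4$, $q_{k+1} = 5$, where the right-hand side is $\frac{4}{5} + \frac{1}{4} = \frac{21}{20} > 1$. The estimate $\left|\kappa - p_k/q_k\right| < 1/(q_k q_{k+1})$ is simply too weak here because it ignores where $y$ sits between $q_k$ and $q_{k+1}$. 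The standard repair is the best-approximation (second-kind) property of convergents: if $0 < y < q_{k+1}$ and $x/y \ne p_k/q_k$, then $|y\kappa - x| \ge |q_k\kappa - p_k|$, whence $\left|\kappa - p_k/q_k\right| \le \frac{y}{q_k}\left|\kappa - \frac{x}{y}\right| < \frac{1}{2q_k y}$; feeding this into your triangle inequality yields $\frac{1}{q_k y} < \frac{1}{2y^2} + \frac{1}{2q_k y}$, i.e.\ $y < q_k$, the desired contradiction. (Legendre's original argument, writing $\kappa - x/y = \varepsilon\theta/y^2$ with $\theta < 1/2$ and expanding $x/y$ as a finite continued fraction of suitable parity, is an alternative.) Your closing paragraph correctly senses that something beyond the naive comparison is needed, but it only gestures at ``comparing the two neighboring convergents'' without supplying the actual mechanism, so as written the first assertion remains unproved.
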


\noindent
In some cases, it is necessary to apply the LLL-algorithm. This algorithm is used to find an effective lower bound for $ |\lambda_1 x_1+\lambda_2 x_2| $, where $ \lambda_1, \lambda_2 $ are real numberd, and $ x_2,x_2 $ are rational integers such that $ |x_j|\le X_j \in \mathbb{N} $ for $ j=1,2 $. We state the following lemma, which is the direct consequence of Cohen (\cite{HC1}, Proposition 2.3.20).
\begin{lemma}\label{LLL}
Keeping the above assumptions, let $ C $ be a fixed (large) positive constant.  Let $ \Lambda $ be the lattice generated by the columns of the matrix
\begin{align*}
\begin{pmatrix}
1 & 0 \\ 
\lfloor C\lambda_1 \rceil & \lfloor C\lambda_2 \rceil
\end{pmatrix} .
\end{align*}
Consider a reduced basis $ \{\mathbf{v_1}, \mathbf{v_2}\} $ of $ \Lambda$ together with $ \{\mathbf{v_1^*}, \mathbf{v_2^*}\}$ as its Gram-Schmidt associated basis. Put
\begin{align*}
d_\Lambda= \dfrac{\Vert \mathbf{v_1} \Vert}{\max\{1,\Vert \mathbf{v_1} \Vert/\Vert \mathbf{v_2^*} \Vert \}} \quad \text{and} \quad T=\dfrac{1+X_1^2+X_2^2}{2}.
\end{align*}
If $ d_\Lambda^2\ge T^2 +X_1^2 $, the we have that
\begin{align*}
 |\lambda_1 x_1+\lambda_2 x_2| \ge \dfrac{\sqrt{d_\Lambda^2-X_1^2}-T}{C}.
\end{align*}
\end{lemma}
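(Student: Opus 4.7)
The plan is to manufacture a nonzero lattice vector $\mathbf{v} \in \Lambda$ whose coordinates are essentially $(x_1,\, C(\lambda_1 x_1 + \lambda_2 x_2))^T$ (up to controllable rounding errors introduced by $\lfloor\cdot\rceil$), and then to extract the desired inequality from the standard lower bound $\Vert\mathbf{v}\Vert \ge d_\Lambda$ satisfied by every nonzero lattice vector. Concretely, I would set $\mathbf{v} = x_1 \mathbf{b_1} + x_2 \mathbf{b_2} = (x_1,\, CL + r)^T$, where $\mathbf{b_1}, \mathbf{b_2}$ denote the columns of the generating matrix, $L := \lambda_1 x_1 + \lambda_2 x_2$, and the rounding defect $r := (\lfloor C\lambda_1\rceil - C\lambda_1)x_1 + (\lfloor C\lambda_2\rceil - C\lambda_2)x_2$ satisfies $|r| \le \tfrac{1}{2}(|x_1| + |x_2|)$ since each $\lfloor C\lambda_j\rceil$ is within $\tfrac{1}{2}$ of $C\lambda_j$.

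The central step is the estimate $\Vert\mathbf{v}\Vert \ge d_\Lambda$ for every nonzero $\mathbf{v} \in \Lambda$. I would derive this directly from the Gram--Schmidt decomposition: writing $\mathbf{v} = a_1\mathbf{v_1} + a_2\mathbf{v_2}$ with $(a_1,a_2) \in \mathbb{Z}^2 \setminus \{(0,0)\}$ and re-expanding in the orthogonal basis $\{\mathbf{v_1^*}, \mathbf{v_2^*}\}$, the coefficient of $\mathbf{v_2^*}$ is exactly $a_2$, so Pythagoras gives $\Vert\mathbf{v}\Vert \ge |a_2|\, \Vert\mathbf{v_2^*}\Vert \ge \Vert\mathbf{v_2^*}\Vert$ whenever $a_2 \ne 0$; otherwise $\mathbf{v} = a_1\mathbf{v_1}$ with $a_1 \ne 0$, yielding $\Vert\mathbf{v}\Vert \ge \Vert\mathbf{v_1}\Vert$. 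In both cases $\Vert\mathbf{v}\Vert \ge \min\{\Vert\mathbf{v_1}\Vert, \Vert\mathbf{v_2^*}\Vert\}$, and a direct case analysis on whether $\Vert\mathbf{v_1}\Vert \le \Vert\mathbf{v_2^*}\Vert$ shows that this minimum coincides with $d_\Lambda$ as it is defined in the statement.

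To conclude, I would use the identity $\Vert\mathbf{v}\Vert^2 = x_1^2 + (CL + r)^2$ together with $\Vert\mathbf{v}\Vert^2 \ge d_\Lambda^2$ and $x_1^2 \le X_1^2$ to infer $|CL + r| \ge \sqrt{d_\Lambda^2 - X_1^2}$, the radicand being non-negative by virtue of the hypothesis $d_\Lambda^2 \ge T^2 + X_1^2$. A reverse triangle inequality then yields $C|L| \ge \sqrt{d_\Lambda^2 - X_1^2} - |r|$, after which the last task is to bound $|r| \le T$. Starting from $|r| \le \tfrac{1}{2}(|x_1| + |x_2|) \le \tfrac{1}{2}\sqrt{2(X_1^2 + X_2^2)}$ (Cauchy--Schwarz), applying the elementary estimate $2s \le (1+s)^2$ with $s = X_1^2 + X_2^2$ produces $\sqrt{2(X_1^2+X_2^2)} \le 1 + X_1^2 + X_2^2 = 2T$, so $|r| \le T$ as required; dividing through by $C$ then delivers the stated lower bound for $|L|$.

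The only genuine obstacle is the lattice-geometric step $\Vert\mathbf{v}\Vert \ge d_\Lambda$; the remaining ingredients are just the triangle inequality and elementary algebra. That step is, however, entirely standard (it is the two-dimensional case of the fact that the first successive minimum of a lattice is bounded below by the minimum of the Gram--Schmidt norms of any basis), so I do not anticipate any serious difficulty in carrying out the plan in full.
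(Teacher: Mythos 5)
Your proposal is correct, but it is worth noting that the paper does not actually prove this lemma at all: it simply states it as ``the direct consequence of Cohen (\cite{HC1}, Proposition 2.3.20)'' and leaves the verification to the citation. What you have written is, in effect, a complete and self-contained proof of the two-dimensional case of Cohen's proposition, and every step checks out: the identification $d_\Lambda=\min\{\Vert\mathbf{v_1}\Vert,\Vert\mathbf{v_2^*}\Vert\}$ follows from the case analysis on $\max\{1,\Vert\mathbf{v_1}\Vert/\Vert\mathbf{v_2^*}\Vert\}$ exactly as you say; the bound $\Vert\mathbf{v}\Vert\ge d_\Lambda$ for nonzero $\mathbf{v}=a_1\mathbf{v_1}+a_2\mathbf{v_2}$ via the $\mathbf{v_2^*}$-coefficient (which is precisely $a_2$, since $\mathbf{v_1^*}=\mathbf{v_1}$) is the standard Gram--Schmidt argument; and the chain $|r|\le\tfrac12(|x_1|+|x_2|)\le\tfrac12\sqrt{2(X_1^2+X_2^2)}\le T$, using $2s\le(1+s)^2$, is exactly why $T$ has the specific form $(1+X_1^2+X_2^2)/2$. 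Two small observations. First, you should state explicitly that $(x_1,x_2)\neq(0,0)$, which guarantees $\mathbf{v}\neq\mathbf{0}$ (if $x_1=0$ then the second coordinate is $\lfloor C\lambda_2\rceil x_2\neq 0$ for $C$ large, which also implicitly requires $\lambda_2\neq 0$); without this the lower bound $\Vert\mathbf{v}\Vert\ge d_\Lambda$ is unavailable, and indeed the conclusion can fail for the zero vector unless $d_\Lambda^2=T^2+X_1^2$ exactly. Second, your proof reveals that the LLL-reducedness of the basis is never used logically --- the argument works for any basis of $\Lambda$; reducedness serves only to make $d_\Lambda$ large enough in practice that the hypothesis $d_\Lambda^2\ge T^2+X_1^2$ can be met, which is the computational point of running LLL in the first place. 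Both points would be worth a sentence each in a final write-up, but neither is a gap in the mathematics.
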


\noindent We will also need the following lemma by G\'{u}zman S\'{a}nchez and Luca (\cite{GSL}, Lemma 7):
\begin{lemma}[G\'{u}zman S\'{a}nchez, Luca, \cite{GSL}]\label{l3}
Let $r \geq 1$ and $H > 0$ be such that $H > (4r^2)^r$ and $H > L/(\log L)^r$. Then
\begin{align*}
L < 2^r H (\log H)^r.
\end{align*}
\end{lemma}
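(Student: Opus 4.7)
The plan is to rewrite the hypothesis $H > L/(\log L)^r$ as $L < H(\log L)^r$ and split the argument into two cases according to whether $(\log L)^r \leq H$ or $(\log L)^r > H$. One case will yield the desired bound directly; the other will be shown to contradict the second hypothesis $H > (4r^2)^r$.

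In the first case, where $(\log L)^r \leq H$, the inequality $L < H(\log L)^r$ immediately gives $L < H^2$, hence $\log L < 2\log H$. Plugging this bound back into $L < H(\log L)^r$ yields
\begin{equation*}
L < H(2\log H)^r = 2^r H (\log H)^r,
\end{equation*}
which is exactly the desired conclusion.

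For the second case, where $(\log L)^r > H$, the plan is to derive a contradiction. Combining $L < H(\log L)^r$ with $H < (\log L)^r$ gives $L < (\log L)^{2r}$, equivalently $\log L < 2r\log\log L$. The key elementary step is that this inequality forces $\log L < 4r^2$. To see this, consider $g(y) = y - 2r\log y$: its derivative $1 - 2r/y$ is nonnegative for $y \geq 2r$, and at $y = 4r^2$ one computes $g(4r^2) = 4r(r-\log(2r))$, which is nonnegative for every $r \geq 1$ since the auxiliary function $r \mapsto r - \log(2r)$ is positive at $r = 1$ and has nonnegative derivative $1 - 1/r$. Hence $g(y) \geq 0$ for all $y \geq 4r^2$, so by contrapositive $\log L < 2r\log\log L$ forces $\log L < 4r^2$. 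On the other hand, the assumptions in this case give $(\log L)^r > H > (4r^2)^r$, i.e., $\log L > 4r^2$, a contradiction. Thus the second case cannot occur, and the first case yields the lemma.

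The argument is almost entirely an algebraic manipulation of the two hypotheses; the only slightly delicate point is the elementary real-analysis verification that $y < 2r\log y$ implies $y < 4r^2$, and this is precisely where the numerical threshold $(4r^2)^r$ in the hypothesis is used. That is the main (though minor) obstacle.
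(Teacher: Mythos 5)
Your proof is correct, but there is nothing in the paper to compare it against: the statement is quoted as Lemma~7 of G\'uzman S\'anchez and Luca \cite{GSL} and used as a black box, with no proof given in the text. Your argument is in fact the standard (and essentially the original) route to this bound. The case split on whether $(\log L)^r \le H$ is the right mechanism: in the first case the bootstrap $L < H^2$, hence $\log L < 2\log H$, fed back into $L < H(\log L)^r$, gives exactly $L < 2^r H(\log H)^r$; in the second case your monotonicity analysis of $g(y) = y - 2r\log y$ is sound, since $g'(y) = 1 - 2r/y \ge 0$ for $y \ge 2r$, $4r^2 \ge 2r$ for $r \ge 1$, and $g(4r^2) = 4r\bigl(r - \log(2r)\bigr) > 0$ because $r - \log(2r)$ is positive at $r=1$ and nondecreasing for $r \ge 1$; this shows $\log L < 2r\log\log L$ forces $\log L < 4r^2$, contradicting $\log L > 4r^2$, which follows from $(\log L)^r > H > (4r^2)^r$ by taking $r$-th roots. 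This is precisely where the threshold $(4r^2)^r$ earns its keep, as you observe. The only loose ends are implicit size assumptions, all trivially dispatched: one needs $\log L > 0$ for the step $(\log L)^r < (2\log H)^r$ (if $1 < L \le e$ then $H > L/(\log L)^r \ge L$ and the conclusion holds outright, since $\log H > 1$ from $H > (4r^2)^r \ge 4$), and in the second case $\log L > 4r^2 > e$ guarantees $\log\log L > 1$, so the inequality $\log L < 2r\log\log L$ is meaningful. None of this affects correctness; your write-up supplies a complete self-contained proof of a result the paper only cites.
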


\section{Proof of the Main Result.}
\subsection{The low range, $ n\in [0,300] $.}
A simple computer program in Sage was used to check all the solutions of the Diophantine equation \eqref{eq1} for the parameters $ d_1\neq d_2 \in \{0, \ldots, 9\},~ d_1>0 $ and $ 1\le \ell,m \le n\le 500 $ The only solutions found are those stated in Theorem \ref{thm1x}. From now onwards, we assume that $n > 500$.

\subsection{The initial bound on $n$.}
We note that \eqref{eq1} can be rewritten as

\begin{align}\label{eq3}
S_n & = \overline{\underbrace{d_1\cdots d_1}_\text{$\ell$ times} \underbrace{d_2\cdots d_2}_\text{$m$ times} \underbrace{d_1\cdots d_1}_\text{$\ell$ times}} \nonumber \\
& = \overline{\underbrace{d_1\cdots d_1}_\text{$\ell$ times}} \times 10^{\ell + m} + \overline{ \underbrace{d_2\cdots d_2}_\text{$m$ times}} \times 10^\ell + \overline{\underbrace{d_1\cdots d_1}_\text{$\ell$ times}} \nonumber \\
\Rightarrow ~S_n & = \frac{1}{9} \left(d_1 \times 10^{2\ell + m} - (d_1 - d_2) \times 10^{\ell+m} + (d_1-d_2) \times 10^\ell -  d_1 \right).
\end{align}

\noindent The next lemma relates the sizes of $n$ and $2\ell + m$.

\begin{lemma}\label{l4}
All solutions of the Diophantine equation \eqref{eq3} satisfy the inequality below

\begin{equation*}
(2\ell + m) \log 10 - 3 < n \log \alpha < (2\ell + m) \log 10 + 1.
\end{equation*}

\end{lemma}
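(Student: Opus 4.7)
The plan is to combine two elementary size estimates on $S_n$: a geometric one coming from the Binet-like formula (packaged in Lemma \ref{lem1}), and a decimal one coming from the digit structure of the palindrome on the right-hand side of \eqref{eq1}.

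First I would count digits. Since the right-hand side of \eqref{eq1} has leading digit $d_1\ge 1$ and a total of $2\ell+m$ digits, it lies in the interval $[10^{2\ell+m-1},\,10^{2\ell+m})$. Hence
\begin{equation*}
10^{2\ell+m-1}\le S_n < 10^{2\ell+m}.
\end{equation*}
Second, Lemma \ref{lem1} (applied with $m:=n$) gives $\alpha^{n-1}\le S_n<\alpha^{n+1}$.

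Now I would chain the two double inequalities. From $\alpha^{n-1}\le S_n<10^{2\ell+m}$ and taking logarithms we get $(n-1)\log\alpha<(2\ell+m)\log 10$, so
\begin{equation*}
n\log\alpha<(2\ell+m)\log 10+\log\alpha<(2\ell+m)\log 10+1,
\end{equation*}
since $\log\alpha<\log 1.84<0.61<1$. Similarly, from $10^{2\ell+m-1}\le S_n<\alpha^{n+1}$ I obtain $(2\ell+m-1)\log 10<(n+1)\log\alpha$, which rearranges as
\begin{equation*}
n\log\alpha>(2\ell+m)\log 10-\log 10-\log\alpha>(2\ell+m)\log 10-3,
\end{equation*}
using the numerical estimate $\log 10+\log\alpha<2.303+0.61<3$. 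Putting the two bounds together gives the claimed inequality.

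There is no real obstacle: both ingredients are already available (the digit count is immediate from $d_1\ge 1$ and the number of digits in the palindrome, while the geometric bound is Lemma \ref{lem1}), and the rest is a one-line logarithmic manipulation with the numerical constants $\log\alpha\approx 0.609$ and $\log 10\approx 2.303$.
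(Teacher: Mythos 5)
Your argument is correct and is essentially identical to the paper's proof: both sandwich $S_n$ between the digit-count bounds $10^{2\ell+m-1}\le S_n<10^{2\ell+m}$ and the bounds $\alpha^{n-1}\le S_n<\alpha^{n+1}$ from Lemma \ref{lem1}, then take logarithms and use $\log\alpha<1$ and $\log 10+\log\alpha<3$. No differences worth noting.
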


\begin{proof}
\noindent From Lemma  \ref{lem1}, we have $\alpha^{n-1} \le  S_n <  \alpha^{n+1}$. So, we note that 
\begin{equation*}
\alpha^{n-1} \le  S_n < 10^{2\ell + m}.
\end{equation*}

\noindent Taking the logarithm on both sides of the preceeding inequality and simplifying,  we get 

\begin{equation*}
n \log \alpha < (2\ell + m) \log 10 + \log \alpha.
\end{equation*}

\noindent Hence, using the estimates in \eqref{Trib4}, we get
\begin{equation}\label{qw1}
n \log \alpha < (2\ell + m) \log 10 + 1. 
\end{equation}
\noindent
On the other hand, using Lemma \ref{lem1},  we also note that 
\begin{align*}
10^{2\ell + m -1} < S_n <  \alpha^{n+1}.
\end{align*}
Taking logarithms on both sides and simplifying as before gives
\begin{align*}
(2\ell +m)\log 10-\log 10 - \log \alpha < n\log \alpha.
\end{align*}
Similary, using the estimates in \eqref{Trib4}, we get that
\begin{equation}\label{qw2}
(2\ell +m)\log 10-3 < n\log \alpha.
\end{equation}
Clearly, from \eqref{qw1} and \eqref{qw2}, we get the desired inequality in Lemma \ref{l4}, as required.
\end{proof}

\noindent We now proceed to examine the Diophantine equation \eqref{eq3} in three different steps as follows.

\noindent \textbf{Step 1.} From the equations \eqref{Trib3} and \eqref{eq3}, we have that

\begin{equation*}
9(\alpha^n +  \beta^n +  \gamma^n )= d_1 \times 10^{2\ell + m} - (d_1-d_2)\times 10^{m+\ell} + (d_1-d_2)\times 10^\ell - d_1.
\end{equation*}

\noindent Hence,

\begin{equation*}
9 \alpha^n  - d_1 \times 10^{2\ell + m} = -9\xi(n) - (d_1-d_2)\times 10^{m+\ell} + (d_1-d_2)\times 10^\ell - d_1.
\end{equation*}

\noindent Thus, we have that

\begin{align*}
|9 \alpha^n  - d_1 \times 10^{2\ell + m}| & = |-9\xi(n) - (d_1-d_2)\times 10^{m+\ell} + (d_1-d_2)\times 10^\ell - d_1| \\
& \leq 9 \alpha^{-n/2} + 27 \times 10^{m + \ell}  < 28 \times 10^{m + \ell},
\end{align*}

\noindent where we used the fact that $n > 500$. Dividing both sides by $d_1 \times 10^{2\ell + m}$, we get

\begin{equation}\label{eq4}
\left | \left( \frac{9}{d_1} \right) \alpha^n \times 10^{-2\ell -m} - 1  \right | < \frac{28 \times 10^{m+\ell}}{d_1 \times 10^{2\ell +m}} \le  \frac{28}{10^\ell}.
\end{equation}

\noindent We put

\begin{equation}
\Gamma_1 := \left( \frac{9}{d_1} \right) \alpha^n \times 10^{-2\ell -m} - 1 .
\end{equation}

\noindent We shall proceed to compare this upper bound on $|\Gamma_1|$ with the lower bound we deduce from Theorem \ref{thm1}. Note that $\Gamma_1 \neq 0$, otherwise this would imply that $$ \alpha^n = \frac{10^{2\ell + m} \times d_1}{9}.$$ Then,  applying the Galois automorphism, $\sigma$ on both sides of the preceeding equation and taking absolute values, we get that

\begin{equation*}
\left| \frac{10^{2\ell + m} \times d_1}{9} \right| =\left|\sigma\left( \frac{10^{2\ell + m} \times d_1}{9}\right) \right|= |\sigma ( \alpha^n)| = |\beta^n| < 1,
\end{equation*}

\noindent which is a contradiction. Thus, we have that $\Gamma_1 \neq 0$.\\

\noindent With a view towards applying Theorem \ref{thm1}, we define the following parameters:

\begin{equation*}
\eta_1 := \frac{9}{d_1}, \ \eta_2 := \alpha, \ \eta_3 := 10, \ b_1 := 1, \ b_2 := n, \ b_3 := -2\ell - m, \ t: = 3.
\end{equation*}

\noindent Since $$10^{2\ell +m -1} < S_n <  \alpha^{n+1}<10^{n-1},$$ we have that $2\ell + m < n$. Thus, we take $B = n$. We also note that $\mathbb{K} = \mathbb{Q}(\eta_1, \eta_2, \eta_3) = \mathbb{Q}(\alpha)$. Hence, $D = [\mathbb{K}: \mathbb{Q}] =  [\mathbb{Q}(\alpha) \mathbb{Q}] = \deg (\Psi) = 3$.
\noindent Furthermore, using the properties of $ h $ in \eqref{heights},  we note that

\begin{equation*}
h(\eta_1) = h \left( \frac{9}{d_1} \right) \leq h(9) + h(d_1) \leq 2\log 9=4\log 3.
\end{equation*}

\noindent We also have that $h(\eta_2) = h(\alpha) = \frac{1}{3}\log \alpha$ and $h(\eta_3) = \log 10$. Hence, we let

\begin{equation*}
A_1 := 12\log 3, \ A_2 := \log \alpha, \ A_3 := 3 \log 10.
\end{equation*}

\noindent Thus, we deduce via Theorem \ref{thm1} that

\begin{align*}
\log |\Gamma_1| & > -1.4 \cdot 30^6 \cdot  3^{4.5} \cdot  3^2 (1 + \log 3)(1 + \log n)(12\log 3)(\log \alpha)(3 \log 10) \\&> -7.17\times 10^{13}(1 + \log n).
\end{align*}

\noindent Comparing the last inequality obtained above with \eqref{eq4}, we get

\begin{equation*}
\ell \log 10 - \log 28 < 7.17 \times 10^{13}(1 + \log n).
\end{equation*}

\noindent Hence,

\begin{equation}\label{eq5}
\ell \log 10 <  7.18 \times 10^{13}(1 + \log n).
\end{equation}

\noindent \textbf{Step 2.} We rewrite equation \eqref{eq3} as

\begin{equation*}
9 \alpha^n  - d_1 \times 10^{2\ell + m} + (d_1-d_2)\times 10^{\ell+m} = -9\xi(n)  + (d_1-d_2)\times 10^\ell - d_1.
\end{equation*}

\noindent That is, 

\begin{equation*}
9 \alpha^n  - (d_1 \times 10^\ell - (d_1-d_2))\times 10^{\ell+m} = -9\xi(n)  + (d_1-d_2)\times 10^\ell - d_1.
\end{equation*}

\noindent Hence, we get

\begin{align*}
\left|9 \alpha^n  - (d_1 \times 10^\ell - (d_1-d_2))\times 10^{\ell+m}\right| & = \left|-9\xi(n)  + (d_1-d_2)\times 10^\ell - d_1\right|\\
& \leq \frac{9}{\alpha^{n/2}} + 18 \times 10^\ell < 19 \times 10^\ell .
\end{align*}

\noindent Dividing throughout by $(d_1 \times 10^\ell - (d_1-d_2))\times 10^{\ell+m}$, we get that

\begin{align}\label{eq6}
\left|\left(\frac{9a}{d_1 \times 10^\ell - (d_1-d_2)} \right) \alpha^n  \times 10^{-\ell - m}  -1 \right|  < \frac{19 \times 10^\ell}{(d_1 \times 10^\ell - (d_1-d_2))\times 10^{\ell+m}}   < \frac{19}{10^m}.
\end{align}

\noindent We put

\begin{equation*}
\Gamma_2: = \left(\frac{9a}{d_1 \times 10^\ell - (d_1-d_2)} \right) \alpha^n \times 10^{-\ell -m} - 1.
\end{equation*}

\noindent Apply the Galois automorphism $ \sigma $ before, we have that $\Gamma_2 \neq 0$. Otherwise,  this would imply that 

\begin{equation*}
 \alpha^n = 10^{\ell + m}  \left(\frac{d_1 \times 10^\ell - (d_1-d_2)}{9} \right),
\end{equation*}

\noindent which in turn implies that

\begin{equation*}
\left| 10^{\ell + m}  \left(\frac{d_1 \times 10^\ell - (d_1-d_2)}{9} \right) \right| = |\sigma( \alpha^n)| = | \beta^n | < 1,
\end{equation*}

\noindent which again leads to a contradiction. In preparation towards applying Theorem \ref{thm1}, we define the following parameters:

\begin{equation*}
\eta_1: = \frac{9}{d_1 \times 10^\ell - (d_1-d_2)} , \ \eta_2: = \alpha, \ \eta_3: = 10, \ b_1: = 1, \ b_2 := n, \ b_3 := -\ell - m, \ t := 3.
\end{equation*}

\noindent In order to determine what $A_1$ will be, we need to find the find the maximum of the quantities $h(\eta_1)$ and $|\log \eta_1|$. We note that
\begin{align*}
h(\eta_1) & = h \left(\frac{9}{d_1 \times 10^\ell - (d_1-d_2)} \right)\\
& \leq  h(9) + \ell h(10) +h(d_1) + h(d_1-d_2) + \log 2\\
& \leq 10 \log 3 + \ell \log 10\\
& < 10\log 3 + 7.18\times 10^{13} (1+ \log n)\\
& < 7.19\times 10^{13}(1 + \log n),
\end{align*}

\noindent where, in the second last inequality above, we used the bound given in \eqref{eq5}. 

\noindent
On the other hand, we also have that
\begin{align*}
|\log \eta_1| & = \left| \log \left(\frac{9}{d_1 \times 10^\ell - (d_1-d_2)} \right) \right| \\
& \leq  \log 9 + |\log (d_1 \times 10^\ell - (d_1-d_2))|\\
& \leq  \log 9 + \log (d_1 \times 10^\ell) + \left| \log \left( 1 - \frac{d_1 - d_2}{d_1 \times 10^\ell}  \right)  \right|\\
& \leq \ell \log 10 + \log d_1 + \log 9  + \frac{|d_1 - d_2|}{d_1 \times 10^\ell} + \frac{1}{2} \left(  \frac{|d_1 - d_2|}{d_1 \times 10^\ell} \right)^2 + \cdots \\
& \leq \ell \log 10 + 2 \log 9 + \frac{1}{10^\ell} + \frac{1}{2 \times 10^{2\ell}} + \cdots\\
& \leq 7.18 \times 10^{13}(1 + \log n) + 4 \log 3 + \frac{1}{10^\ell - 1} \\
& < 7.20 \times 10^{13}(1 + \log n),
\end{align*}

\noindent where, in the second last inequality, we also used the bound given in \eqref{eq5}. Clearly,  we observe that $D h(\eta_1) > |\log \eta_1|$.

\noindent Thus,  let $A_1 := 21.51 \times 10^{13} (1 + \log n)$. We take $A_2: = \log \alpha$ and $A_3 := 3 \log 10$, as defined in \textbf{Step 1}. Similarly, as before we take $B := n$.  Theorem \ref{thm1} then tells us that

\begin{align*}
\log |\Gamma_2| & > -1.4 \cdot  30^6 \cdot 3^{4.5} \cdot 3^2 \cdot (1 + \log 3)(1 + \log n)(\log \alpha)(3 \log 10)A_1\\
& > -5.44 \cdot 10^{12}(1 + \log n)A_1 > -1.55 \times 10^{28}(1 + \log n)^2.
\end{align*}

\noindent Comparing the last inequality with (\ref{eq6}), we have that

\begin{align}\label{eq7}
m \log 10 & < 1.55 \times 10^{28}(1 + \log n)^2 + \log 19 < 1.56 \times 10^{28}(1+\log n)^2.
\end{align}

\noindent \textbf{Step 3.} We rewrite equation (\ref{eq3}) as 

\begin{equation*}
9 \alpha^n  - d_1 \times 10^{2\ell + m} + (d_1-d_2)\times 10^{\ell+m} - (d_1-d_2)\times 10^\ell = -9\xi(n) - d_1.
\end{equation*}

\noindent Therefore, we have

\begin{align*}
\left| 9\alpha^n - \left(d_1 \times 10^{\ell+m} + (d_1-d_2)\times 10^m - (d_1-d_2)\right)\times 10^\ell  \right| & = |-9\xi(n) - d_1|\\
& \leq \frac{9}{\alpha^{n/2}} + 9 < 10.
\end{align*}

\noindent Hence,

\begin{equation}\label{eq8}
\left| \frac{1}{9} \left(d_1 \times 10^{\ell+m} + (d_1-d_2)\times 10^m - (d_1-d_2)\right)\times \alpha^{-n} \times 10^\ell - 1 \right| < \frac{10}{9 \alpha^n} < \frac{2}{\alpha^n}.
\end{equation}

\noindent Let

\begin{equation*}
\Gamma_3 :=  \frac{1}{9}\left (d_1 \times 10^{\ell+m} + (d_1-d_2)\times 10^m - (d_1-d_2)\right)  \times \alpha^{-n} \times 10^\ell - 1.
\end{equation*}

\noindent As before, we have that $\Gamma_3 \ne 0$. Otherwise, we would have that

\begin{equation*}
 \alpha^n = \frac{1}{9} \left(d_1 \times 10^{\ell+m} + (d_1-d_2)\times 10^m - (d_1-d_2)\right) \times 10^\ell.
\end{equation*}

\noindent Applying the automorphism $\sigma$, of the Galois group $\mathcal{G}$ on both sides of the above equation and then taking absolute values, we get that

\begin{equation*}
\left| \frac{1}{9} \left(d_1 \times 10^{\ell+m} + (d_1-d_2)\times 10^m - (d_1-d_2)\right) \times 10^\ell \right| = |\sigma ( \alpha^n)| = | \beta^n| < 1,
\end{equation*}

\noindent which is a contradiction. We would now like to apply Theorem (\ref{thm1}) to $\Gamma_3$. To this end, we let:

\begin{align*}
\eta_1 &:=   \frac{1}{9}\left( d_1 \times 10^{\ell+m} + (d_1-d_2)\times 10^m - (d_1-d_2) \right), \\ \eta_2 & := \alpha, \ \eta_3 := 10, \ b_1 := 1, \ b_2: = -n, \ b_3: = \ell, \ t: = 3.
\end{align*}

\noindent As in the previous cases, we can take $B: = n$ and $D: = 3$. Using the properties of $ h $ given in \eqref{heights},  we note that
\begin{align*}
h(\eta_1) & \leq h(9) + h(d_1) + (\ell+m)h(10) + h(d_1 - d_2) + m h(10) + h(d_1 - d_2) + 3 \log 2\\
& \leq 16\log 3 + (2m+\ell) \log 10.
\end{align*}

\noindent Using equations (\ref{eq5}) and (\ref{eq7}), we have that
\begin{align}\label{eq9}
(2m + \ell)\log 10 & < 3.12\times 10^{28} (1 + \log n)^2 + 7.18 \times 10^{13}(1 + \log n) < 3.13 \times 10^{28}(1 + \log n)^2.
\end{align}

\noindent Thus, we conclude that
\begin{equation*}
h(\eta_1) < 3.14 \times 10^{28}(1 + \log n)^2.
\end{equation*}

\noindent We now find an upper bound for $|\log \eta_1|$. We have that

\begin{align*}
|\log \eta_1| & = \left| \log \left(  \frac{1}{9} \left(d_1 \times 10^{\ell+m} + (d_1-d_2)\times 10^m - (d_1-d_2) \right)\right) \right| \\
& \le \log 9  + \left|\log \left(d_1 \times 10^{\ell+m} + (d_1-d_2)\times 10^m - (d_1-d_2)\right)\right|\\
& \leq 3 \log 3 + \log (d_1 \times 10^{\ell + m}) + \left| \log \left(  1 - \frac{(d_1 - d_2)(10^m - 1)}{d_1 \times 10^{\ell+m}}  \right)  \right|\\
& \le 5 \log 3 + (\ell+m) \log 10 + \left| \log \left(  1 - \frac{(d_1 - d_2)(10^m - 1)}{d_1 \times 10^{\ell+m}}  \right)  \right|\\
& \le 5 \log 3 + (\ell+m) \log 10 + \frac{|(d_1 - d_2)(10^m - 1)|}{d_1 \times 10^{\ell + m}} + \frac{1}{2} \left( \frac{|(d_1 - d_2)(10^m - 1)|}{d_1 \times 10^{\ell + m}} \right)^2 + \cdots \\
& \le 5 \log 3 + \ell\log 10 +m\log 10 + \frac{1}{10^l} + \frac{1}{2 \times 10^{2\ell}} + \cdots\\
& \le 5 \log 3 + 7.18\times 10^{13}(1+\log n)+ 1.56\times 10^{28}(1+\log n)^2 +\frac{1}{10^\ell - 1}\\
& < 1.58 \times 10^{28}(1 + \log n)^2,
\end{align*}

\noindent where, in the last inequality above, we used the bound from (\ref{eq9}). We note that $D \cdot h(\eta_1) > |\log \eta_1|$. Thus, we let $A_1 = 9.42 \times 10^{28}(1 + \log n)^2$, $A_2 = \log \alpha$ and $3 \log 10$. Theorem \ref{thm1} then implies that

\begin{align*}
\log |\Gamma_3| & > -5.44 \times 10^{12} (1 + \log n)A_1> -5.13 \times 10^{42} (1 + \log n)^3.
\end{align*}

\noindent Comparing the last inequality with \eqref{eq8}, we deduce that

\begin{equation*}
n \log \alpha < 5.13 \times 10^{42} (1 + \log n)^3 + \log 3.
\end{equation*}

\noindent Therefore, we get that 

\begin{equation*}
n < 8.52 \times 10^{42} (\log n)^3.
\end{equation*}

\noindent With the notation of Lemma \ref{l3}, let $r: = 3$, $L := n$ and $H: = 8.52 \times 10^{42}$ and notice that this data meets the conditions of the lemma. Lemma \ref{l3}, tells us that

\begin{equation*}
n < 2^3 \times 8.52 \times 10^{42} (\log (8.52 \times 10^{42}))^3.
\end{equation*}

\noindent After a simplification, we obtain the  bound
\begin{equation*}
n < 6.6 \times 10^{50}.
\end{equation*}

\noindent Lemma \ref{l4} then implies that

\begin{equation*}
2\ell +m < 1.8 \times 10^{50}.
\end{equation*}

\noindent The following lemma summarizes what we have proved up to this far:

\begin{lemma}\label{lemx}
All solutions to the Diophantine equation \eqref{eq1} satisfy the following inequalities
\begin{equation*}
2\ell + m < 1.8 \times 10^{50} \quad \text{and} \quad n < 6.6 \times 10^{50}.
\end{equation*}

\end{lemma}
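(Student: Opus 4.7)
The plan is to apply Baker's theorem (Theorem \ref{thm1}) three times in succession, each time bounding one of the parameters $\ell$, $m$, or $n$ by extracting a suitable linear form in three logarithms from \eqref{eq3}. At every stage I would substitute the Binet-like formula $S_n = \alpha^n + \xi(n)$, with $|\xi(n)| \le 2\alpha^{-n/2}$ from \eqref{Trib5}, and then rearrange \eqref{eq3} so as to isolate a different dominant term. In each case I will need to verify nonvanishing of the linear form, compute Weil heights of the algebraic numbers involved, and compare the resulting lower bound from Matveev with the natural upper bound coming from the size of the discarded terms.

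In the first step, moving everything below the leading block $d_1 \cdot 10^{2\ell+m}/9$ to the right-hand side produces a quantity of size at most $O(10^{\ell+m})$. Dividing by $d_1 \cdot 10^{2\ell+m}$ gives $\Gamma_1 := (9/d_1)\alpha^n \cdot 10^{-2\ell-m} - 1$ satisfying $|\Gamma_1| \ll 10^{-\ell}$. Nonvanishing is forced by a Galois argument: if $\Gamma_1 = 0$, then applying the automorphism $\sigma$ sending $\alpha \mapsto \beta$ would equate a large power of $10$ with $|\beta|^n < 1$. Matveev's theorem with $\eta_1 = 9/d_1$, $\eta_2 = \alpha$, $\eta_3 = 10$, $B = n$ (using $2\ell+m < n$ via Lemma \ref{lem1}) should then yield an upper bound on $\ell \log 10$ of size $10^{13}(1+\log n)$.

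The same mechanism is repeated twice more with different groupings: in Step 2 I absorb both the $10^{2\ell+m}$- and the $10^{\ell+m}$-terms into a single coefficient $(d_1 \cdot 10^\ell - (d_1-d_2))$, producing $\Gamma_2$ bounded by $O(10^{-m})$; in Step 3 I also absorb the $10^\ell$-term, making the leftover $O(1)$ and hence $|\Gamma_3| \ll \alpha^{-n}$. Each time Matveev's theorem is invoked again, but now with an $\eta_1$ of rapidly growing logarithmic height, which must be estimated using the bound from the previous step. Chaining the three bounds gives successively $\ell \ll 10^{13}(1+\log n)$, then $m \ll 10^{28}(1+\log n)^2$, and finally $n \ll 10^{42}(\log n)^3$.

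The last inequality is implicit in $n$, so to make it explicit I would invoke the G\'uzman S\'anchez--Luca lemma (Lemma \ref{l3}) with $r = 3$, $L = n$, and $H$ taken to be the implied constant; this gives $n < 6.6 \times 10^{50}$. Lemma \ref{l4} then converts the bound on $n$ to $2\ell+m < 1.8 \times 10^{50}$, finishing the proof. The main technical obstacle is the bookkeeping of the heights $h(\eta_1)$ at Steps 2 and 3, since the relevant $\eta_1$ becomes a more intricate combination of $d_1$, $d_2$ and powers of $10$; in particular one must control $|\log \eta_1|$ via a logarithm expansion of the form $\log(1 - x)$ with $x = O(10^{-\ell})$ to confirm that $D \cdot h(\eta_1)$ dominates, so that the Matveev constant $A_1$ is correctly of the same order as $h(\eta_1)$.
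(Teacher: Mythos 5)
Your proposal is correct and follows essentially the same route as the paper: the identical three-step decomposition of \eqref{eq3}, the Galois-automorphism nonvanishing argument, the chained Matveev applications producing bounds of order $10^{13}(1+\log n)$, $10^{28}(1+\log n)^{2}$, and $10^{42}(\log n)^{3}$, and the final appeal to Lemma \ref{l3} and Lemma \ref{l4}. No substantive differences to report.
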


\subsection{Reduction of the bounds.}
The bounds obtained in Lemma \ref{lemx} are too large to carry out meaningful computations with the computer. Thus, we need to reduce them. To do so, we apply Lemma \ref{DP} several times as follows.

\noindent
First, we return to the inequality \eqref{eq4} and put
\begin{align*}
z_1:=(2\ell+m)\log 10 -n\log\alpha +\log \left(\frac{d_1}{9}\right).
\end{align*}
The inequality \eqref{eq4} can be rewritten as
\begin{align*}
|\Gamma_1|=\left|e^{-z_1}-1\right| < \dfrac{28}{10^{\ell}}.
\end{align*}
We assume that $ \ell\ge 2 $, then the right--hand side of the above inequality is at most $ 28/100 < 1/2 $. The inequality $ |e^z-1|< x $ for real values of $ x $ and $ z $ implies that $ z<2x $. Thus,
\begin{align*}
|z_1|< \dfrac{56}{10^{\ell}}.
\end{align*}
This implies that
\begin{align*}
\left|(2\ell+m)\log 10 -n\log\alpha -\log \left(\frac{9}{d_1}\right)\right| < \dfrac{56}{10^{l}}.
\end{align*}
Dividing through the above inequality by $ \log\alpha $ gives
\begin{align}\label{kala}
\left|(2\ell+m)\frac{\log 10}{\log\alpha} - n + \frac{\log(d_1/9)}{\log\alpha}\right|< \dfrac{56}{10^{\ell}\log\alpha}.
\end{align}
So, we apply Lemma \ref{DP} with the quantities:
\begin{align*}
\kappa:=\dfrac{\log 10}{\log \alpha}, \quad \mu(d_1):=\frac{\log(d_1/9)}{\log\alpha}, \quad 1\le d_1\le 9, \quad A:=\dfrac{56}{\log\alpha}, \quad B:=10.
\end{align*}
Let $$ \kappa=[a_0; a_1, a_2, \ldots]=[3; 1, 3, 1, 1, 14, 1, 3, 3, 6, 1, 13, 3, 4, 2, 1, 1, 2, 3, 3, 2, 2, 1, 2, 5, 1, 1,39, 2, 1, \ldots], $$ be the continued fraction expansion of $ \kappa $. We set $ M:=10^{51} $ which is the upper bound on $ 2\ell+m $. With the help of a simple program in SageMath, we find out that the convergent
\begin{align*}
\dfrac{p}{p}=\dfrac{p_{98}}{q_{98}}=\dfrac{39444948689252707738489528190760067813905266021850462}{10439083718875559984715310681234336679649552673602845},
\end{align*}
is such that $ q=q_{98}>6M $. Furthermore, it gives $ \varepsilon > 0.00227519 $, and thus,
\begin{align*}
\ell\le \dfrac{\log((56/\log\alpha)q/\varepsilon)}{\log 10}<56.
\end{align*}
For the case $ d_1=9 $, we have that $ \mu(d_1)=0 $. In this case, it is not possible to reduce the bound via Lemma \ref{DP}, so we apply Lemma \ref{lg}. The inequality \eqref{kala} can be rewritten as
\begin{align*}
\left|\frac{\log 10}{\log \alpha} - \frac{n}{2\ell+m}  \right| < \frac{56}{10^\ell(2\ell+m) \log \alpha}<\dfrac{1}{2(2\ell+m)^2},
\end{align*}
because $ 2\ell+m<10^{51}:=M $. It follows from Lemma \ref{lg} that $ \frac{n}{2\ell+m} $  is a convergent of $ \kappa:=\frac{\log 10}{\log\alpha} $. So $ \frac{n}{2\ell+m} $ is of the form $ p_k/q_k $ for some $ k=0, 1, 2, \ldots, 98 $. Thus,
\begin{align}\label{leg}
\dfrac{1}{(a(M)+2)(2\ell+m)^2}\le \left|\dfrac{\log 10}{\log \alpha}-\frac{n}{2\ell+m} \right|<\frac{56}{10^\ell(2\ell+m) \log \alpha}.
\end{align}
With the aid of a simple computer program in SageMath, we have $$ a(M)=\max\{a_{k}: k=0, 1, 2, \ldots, 98\}=44. $$ Thus, using \eqref{leg} we get that
\begin{align*}
\ell \leq \dfrac{\log\left(\dfrac{46\times 56 \times 10^{51}}{\log\alpha}\right)}{\log 10} < 55.
\end{align*}
So, $ \ell\le 56 $ in both cases. In the case $ \ell<2 $, we have that $ \ell<2<56 $. Therefore, $ \ell\le 56 $ holds in all cases.

\noindent
Next, for fixed $ d_1\neq d_2\in\{0, \ldots, 9\}, ~d_1>0 $ and $ 1\le \ell\le 56 $, we return to the inequality \eqref{eq6} and put
\begin{align*}
z_2:=(\ell+m)\log 10 - n\log\alpha +\log\left(\frac{d_1\times 10^{\ell}-(d_1-d_2)}{9a}\right).
\end{align*}
From the inequality \eqref{eq6}, we have that
\begin{align*}
|\Gamma_2| = \left|e^{-z_2}-1\right|< \dfrac{19}{10^{m}}.
\end{align*}
Assume that $ m\ge 2 $, then the right--hand side of the above inequality is at most $ 19/100 < 1/2 $. Thus, we have that
\begin{align*}
|z_2|<  \dfrac{38}{10^{m}},
\end{align*}
which implies that
\begin{align}\label{kala2}
\left|(\ell+m)\log 10 - n\log\alpha +\log\left(\frac{d_1\times 10^{\ell}-(d_1-d_2)}{9}\right)\right| <  \dfrac{38}{10^{m}}.
\end{align}
Dividing through by $ \log\alpha $ gives
\begin{align*}
\left|(\ell+m)\frac{\log 10}{\log\alpha} - n +\log\left(\frac{(d_1\times 10^{\ell}-(d_1-d_2))/9a}{\log\alpha}\right)\right| <  \dfrac{38}{10^{m}\log\alpha}.
\end{align*}
Thus, we apply Lemma \ref{DP} with the quantities:
\begin{align*}
 \quad \mu(d_1,d_2):=\log\left(\frac{(d_1\times 10^{\ell}-(d_1-d_2))/9}{\log\alpha}\right), \quad A:=\dfrac{38}{\log\alpha}, \quad B:=10.
\end{align*}
We take the same $ \kappa $ and its convergent $ p/q =p_{98}/q_{98} $ as before. Since $ \ell+m < 2\ell+m $, we set $M :=10^{51} $ as the upper bound on $ \ell+m $. With the help of a simple computer program in SageMath, for $ \ell\in [1,56] $,  we get that $ \varepsilon >0.0000604124 $, and therefore,
\begin{align*}
m\le \dfrac{\log((38/\log\alpha)q/\varepsilon)}{\log 10}<58.
\end{align*}
For the case $ (d_1, d_2, \ell)=(1,0,1) $, we have that $ \mu(d_1,d_2)=0 $. In this case, it is also not possible to reduce the bound via Lemma \ref{DP}, so we apply Lemma \ref{LLL}. The inequality \eqref{kala2} can be rewritten as
\begin{align}\label{kala3}
\log \left|(\ell+m)\log 10 - n\log\alpha \right| <  \log 38 - m\log 10.
\end{align}
Next we reduce the upper bound for $ m $ as follows. Let $ \lambda_1=\log 10 $, $ \lambda_2=\log\alpha $, $ x_1=\ell+m $, $ x_2=-n $, and a sufficiently large $ C= 10^{110} $. The LLL algorithm uses the lattice $ \Lambda $ generated by the columns of the matrix
\begin{align*}
\begin{pmatrix}
1&0\\ \lfloor   10^{110}\log 10\rceil & \lfloor  10^{110} \log \alpha \rceil
\end{pmatrix}.
\end{align*}
A simple computer program in SageMath is used to compute the  reduced basis $ \{\mathbf{v}_1, \mathbf{v}_2\} $ for $ \Lambda $. 
Then, the Gram-Schmidt associated basis $ \{\mathbf{v}_1^{*}, \mathbf{v}_2^{*}\} $ is obtained using the standard procedure:
\begin{align*}
\mathbf{v}_1^{*}=\mathbf{v}_1 \quad \text{and} \quad \mathbf{v}_2^{*}=\mathbf{v}_2-\dfrac{\langle \mathbf{v}_2, \mathbf{v}_1\rangle}{\langle \mathbf{v}_1, \mathbf{v}_1 \rangle}\mathbf{v}_1.
\end{align*}
Let $ X_1=1.8\times 10^{50} $ and $ X_2=6.6\times 10^{50} $. Then, it is clear that $ |x_i|\le X_i $ for $ i=1,2 $. Now, we calculate 
\begin{align*}
d_\Lambda= \dfrac{\Vert \mathbf{v_1} \Vert}{\max\{1,\Vert \mathbf{v_1} \Vert/\Vert \mathbf{v_2^*} \Vert \}}=5.10\times 10^{109} \quad \text{and} \quad T=\dfrac{1+X_1^2+X_2^2}{2}=2.34\times 10^{101}.
\end{align*}
Since $ d_\Lambda^2=2.6\times 10^{219}>5.48\times 10^{202}=T^2+X_1^2 $, 
Lemma \ref{LLL} guarantees that 
\begin{align}\label{kala4}
\log \left|(\ell+m)\log 10 - n\log\alpha \right| > \log(0.0000512). 
\end{align}
Comparing \eqref{kala3} with \eqref{kala4}, we get that $ m\le 6 $. Thus, we have that $ m\le 58 $ in both cases. The case $ m<2 $ holds as well since $ m<2<58 $.

\noindent
Lastly, for fixed $ d_1\neq d_2\in\{0, \ldots, 9\}, ~d_1>0 $, $ 1\le \ell\le 56 $ and $ 1\le m\le 58 $, we return to the inequality \eqref{eq8} and put
\begin{align*}
z_3:=\ell\log 10 -n\log\alpha +\log\left(\dfrac{d_1\times 10^{\ell+m}+(d_1-d_2)\times 10^{m}-(d_1-d_2)}{9}\right).
\end{align*}
From the inequality \eqref{eq8}, we have that
\begin{align*}
|\Gamma_3|=\left|e^{z_3}-1\right|< \frac{2}{\alpha^{n}}.
\end{align*}
Since $ n>500 $, the right--hand side of the above inequality is less than $ 1/2 $. Thus, the above inequality implies that
\begin{align*}
|z_3|<  \frac{4}{\alpha^{n}},
\end{align*}
which leads to
\begin{align*}
\left|\ell\log 10 -n\log\alpha +\log\left(\dfrac{d_1\times 10^{\ell+m}+(d_1-d_2)\times 10^{m}-(d_1-d_2)}{9}\right)\right| <  \frac{4}{\alpha^{n}}.
\end{align*}
Dividing through by $ \log\alpha $ gives,
\begin{align*}
\left|\ell\frac{\log 10}{\log\alpha} -n +\log\left(\dfrac{(d_1\times 10^{\ell+m}+(d_1-d_2)\times 10^{m}-(d_1-d_2))/9}{\log\alpha}\right)\right| <  \frac{4}{\alpha^{n}\log\alpha}.
\end{align*}
Again, we apply Lemma \ref{DP} with the following data:
\begin{align*}
 \quad \mu(d_1, d_2):=\log\left(\dfrac{(d_1\times 10^{\ell+m}+(d_1-d_2)\times 10^{m}-(d_1-d_2))/9}{\log\alpha}\right), \quad A:=\dfrac{4}{\log \alpha}, \quad B:=\alpha.
\end{align*}
We take the same $ \kappa $ and its convergent $ p/q=p_{98}/q_{98} $ as before. Since $ \ell<2\ell+m $, we choose $ M:=10^{51} $ as the upper bound for $ \ell $. With the help of a simple computer program in SageMath, we get that $ \varepsilon >0.000000106965 $, and thus,
\begin{align*}
n\le \dfrac{\log((4/\log\alpha)q/\varepsilon)}{\log\alpha}<226.
\end{align*}
Thus, we have that $ n\le 226 $. This contradicts with our initial assumption that $ n> 500 $. Hence, Theorem \ref{thm1x} is completely proved. \qed

\section*{Acknowledgements.}
This research was done when the author visited the Max Planck Institute for Mathematics in Bonn for the period 1 August to 31 October 2025. He greatly thanks this institution for the hospitality and a fruitful working environment.

\section*{Declarations}
\subsection*{Data Availability Statement}
All the data generated or analysed during this study are included in this article.

\subsection*{Conflict of interest} The author declares no conflict of competing interests.

\section*{Addresses}
\noindent
$ ^{1} $ Department of Mathematics, Makerere University, Kampala, Uganda

\noindent
$ ^{2} $ Max Planck Institute for Mathematics, Bonn, Germany

\noindent
Email: \url{mahadi.ddamulira@mak.ac.ug}


\begin{thebibliography}{99}

\bibitem{BMS}
Y Bugeaud, M Mignotte, and S Siksek,
\textit{Classical and modular approaches to exponential Diophantine equations I. Fibonacci and Lucas perfect powers},
Annals of Mathematics, (2), {\bf 163}(2):969--1018, 2006.

\bibitem{HC1}
H. Cohen, {\it A Course in Computational Algebraic Number Theory}, Springer, New York, 1993.

\bibitem{HC2}
H. Cohen, {\it Number Theory. Volume I: Tools and Diophantine Equations}, Springer, New York, 2007.


\bibitem{MD}
T P Chalebgwa and M Ddamulira
\textit{Padovan numbers which are palindromic concatenations of two distinct repdigits}, Rev. R. Acad. Cienc. Exactas F\'is. Nat. Ser. A Mat. RACSAM, {\bf 115}(3):Art. No. 108, 14 pp, 2021.


\bibitem{MD1}
M Ddamulira,
\textit{Padovan numbers that are concatenations of two distinct repdigits}, Math. Slovaca, {\bf 71}(2):275--284, 2021.

\bibitem{MD2}
M Ddamulira,
\textit{Tribonacci numbers that are concatenations of two repdigits},
Rev. R. Acad. Cienc. Exactas F\'is. Nat. Ser. A Mat. RACSAM, {\bf 114}(4):Art. No. 203, 10 pp, 2020.

\bibitem{MD3}
M Ddamulira, P. Emong, and G. I. Mirumbe
\textit{Palindromic concatenations of two distinct repdigits in Narayana's cows sequence}, Bull. Iranian Math. Soc., {\bf 50}(3):Art. No. 35, 16pp, 2024.




\bibitem{DP}
A Dujella and A Peth\H{o},
\textit{A generalization of a theorem of Baker and Davenport}, The
Quarterly Journal of Mathematics, (2), {\bf 49}(195):291--306, 1998.


\bibitem{GSL}
S G\'{u}zman S\'{a}nchez and F Luca,
\textit{Linear combinations of factorials and $s$-units in a binary recurrence sequence},
Annales Math\'{e}matiques du Qu\'{e}bec, {\bf 38}(2):169--188, 2014.





\bibitem{OEIS}
OEIS Foundation Inc. (2022). {\it The On-Line Encyclopedia of Integer Sequences}, \url{http://oeis.org/A001644}. 



\end{thebibliography}
\end{document}